\newtheorem{theorem}{Theorem}
\theoremstyle{plain}
\newtheorem{case}{Case}
\newtheorem{corollary}{Corollary}
\newtheorem{lemma}{Lemma}
\newtheorem{proposition}{Proposition}
\numberwithin{equation}{section}
\begin{document}
\title{Biharmonic Pseudo-Riemannian submersions from $3$-manifolds}
\author{\.{I}rem K\"{u}peli Erken}
\address{Faculty of Natural Sciences, Architecture and Engineering,
Department of Mathematics, Bursa Technical University, Bursa, TURKEY}
\email{irem.erken@btu.edu.tr}
\author{Cengizhan Murathan}
\address{Art and Science Faculty,Department of Mathematics, Uludag
University, 16059 Bursa, TURKEY}
\email{cengiz@uludag.edu.tr}
\date{23.01.2017}
\subjclass[2000]{Primary 53B20, 53B25, 53B50; Secondary 53C15, 53C25}
\keywords{pseudo-Riemannian submersions, biharmonic $3$-manifolds}

\begin{abstract}
We classify the pseudo-Riemannian biharmonic\ submersion from a\ $3$%
-dimensional space form into a surface.
\end{abstract}

\maketitle

\section{INTRODUCTION}

\bigskip The theory of Riemannian submersions was initiated by O'Neill \cite%
{oneill} and Gray \cite{gray}. One of the well known example of a Riemannian
submersion is the projection of a Riemannian product manifold on one of its
factors. Presently, there is an extensive literature on the Riemannian
submersions with different conditions imposed on the total space and on the
fibres. A systematic exposition could be found in A. Besse's book \cite{BES}%
. Pseudo-Riemannian submersions were introduced by O'Neill \cite{ONEILL2}.
Magid classified pseudo-Riemannian submersions with totally geodesic fibres
from an anti-de Sitter space onto a Riemannian manifold \cite{magid}. Then B%
\u{a}di\c{t}ou gave the classification of the pseudo-Riemannian submersions
with (para) complex connected totally geodesic fibres from a (para) complex
pseudo-hyperbolic space onto a pseudo Riemannian manifold \cite{Ba1, Ba2}.

A map between Riemannian manifolds is harmonic if the divergence of its
differential vanishes. The first major study of harmonic maps has been begun
by J. Eells and J. H. Sampson \cite{ES}. In \cite{ES}, Eells and Sampson
defined biharmonic maps between Riemannian manifolds as an extension of
harmonic maps and Jiang obtained their first and second variational formulas 
\cite{JI}.

During the last decade important progress has been made in the study of both
the geometry and the analytic properties of biharmonic maps. A fundamental
problem in the study of biharmonic maps is to classify all proper biharmonic
maps between certain model spaces. An example of this was proved
independently by Chen-Ishikawa \cite{CHENNN} and Jiang \cite{JI} that every
biharmonic surface in a Euclidean $3$-space $E^{3}$ is a minimal surface.
Later, Caddeo et al. showed that the theorem remains true if the target
Euclidean space is replaced by\ $3$-dimensional hyperbolic space form \cite%
{CAD}. Chen and Ishikawa also proved that biharmonic Riemannian surface in $%
E_{1}^{3}$ is a harmonic surface \cite{CHEN}.\ For Riemannian submersions,
Wang and Ou stated that Riemannian \ submersion from a\ $3$-dimensional
space form into a surface is biharmonic if and only if it is harmonic \cite%
{ze-ye}.

The above results give us the motivation for preparing this study. In this
paper, we study the biharmonic pseudo-Riemannian submersions from $3$%
-manifolds.

The main purpose of section $\S 2$ is to give a brief information about
pseudo-Riemannian submersions, biharmonic maps and space forms. In this
section, we also give some properties of fundamental tensors and fundamental
equations which we will use them to obtain our results. In section $\S 3$,
we investigate the biharmonicity of a pseudo-Riemannian submersion from a $3$%
-manifold by using \ the integrability data of a special orthonormal frame
adapted to a \ pseudo-Riemannian submersion. Finally, we give a complete
classification of biharmonic pseudo-Riemannian submersions from a $3$%
-dimensional pseudo-Riemannian space form.

\section{PRELIMINARIES}

\subsection{Pseudo-Riemannian submersions with totally geodesic fibre}

In this subsection we recall several notions and results which will be
needed throughout the paper.

Let $(M,g)$ be an $m$-dimensional connected pseudo-Riemannian manifold of
index $s$ $(0\leq s\leq m)$, let $(B,g^{\prime })$ be an $n$-dimensional
connected pseudo-Riemannian manifold of index $r\leq s,(0\leq r\leq n)$. In
case of Riemannian submersion, the fibers are always Riemannian manifolds.

A pseudo-Riemannian submersion is a smooth map $\pi :M\rightarrow B$ which
is onto and satisfies the following three axioms:

$S1$. $\pi _{\ast }\mid _{p}$ is onto for all $p\in M$,

$S2$. the restriction of the metric to the fibres $\pi ^{-1}(b)$, $b\in B$
are non degenerate ,

$S3$. $\pi _{\ast }$ preserves scalar products of vectors normal to fibres.

We shall always assume that the dimension of the fibres dim$M$ - dim$B$ is
positive and the fibres are connected. By S2, one can observe fibres as
spacelike and timelike cases.

The vectors tangent to fibres are called vertical and those normal to fibres
are called horizontal. We denote by $V$ the vertical distribution and by $H$
the horizontal distribution. The fundamental tensors of a submersion were
defined by O'Neill (\cite{oneill}, \cite{ONEILL2}). They are $(1,2)$-tensors
on $M$, given by the formulas:%
\begin{eqnarray}
T(E,F) &=&T_{E}F=h\nabla _{\nu E}\nu F+\nu \nabla _{\nu E}hF,  \label{AT} \\
A(E,F) &=&A_{E}F=\nu \nabla _{hE}hF+h\nabla _{hE}\upsilon F,  \notag
\end{eqnarray}%
for any $E,$ $F\in X(M).$ Here $\nabla $ denotes the Levi-Civita connection
of $(M,g).$ These tensors are called integrability tensors for the
pseudo-Riemannian submersions. We use the $h$ and $\nu $ letters to denote
the orthogonal projections on the vertical and horizontal distributions
respectively. A vector field $X$ on $M$ is said to be basic if \ it is the
unique horizontal lift of a vector field $X_{\ast }$ on $B$, so that $\pi
_{\ast }(X)=X_{\ast }$ $\ $is horizontal and $\pi $-related to a vector
field $X_{\ast }$ on $B$. It is easy to see that every vector field $X_{\ast
}$ on $B$ has a unique horizontal lift $X$ to $M$ and $X$ is basic. The
following\ lemmas are well known (see \cite{oneill}, \cite{ONEILL2}).

\begin{lemma}
Let $\pi :(M,g)\rightarrow (B,g^{\prime })$ be a pseudo-Riemannian
submersion. If $\ X,$ $Y$ are basic vector fields on $M$, then
\end{lemma}

$i)$ $g(X,Y)=g^{\prime }(X_{\ast },Y_{\ast })\circ \pi ,$

$ii)$ $h[X,Y]$ is basic and $\pi $-related to $[X_{\ast },Y_{\ast }]$,

$iii)$ $h(\nabla _{X}Y)$ is a basic vector field corresponding to $\nabla
_{X_{\ast }}^{^{B}}Y_{\ast }$ where $\nabla ^{B}$ is the connection on $B.$

$iv)$ for any vertical vector field $V$, $[X,V]$ is vertical.

\begin{lemma}
For any $U,W$ vertical and $X,Y$ horizontal vector fields, the tensor fields 
$T$ and $A$ satisfy
\end{lemma}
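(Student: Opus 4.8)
The plan is to read off every identity directly from the defining formulas (\ref{AT}), together with the two standard properties of the Levi-Civita connection $\nabla$ — that it is torsion-free and metric — and the structural facts already recorded in Lemma 1. First I would specialize (\ref{AT}) to the relevant arguments. Since $U,W$ are vertical one has $\nu U=U$, $\nu W=W$ and $hW=0$, so the formula collapses to $T_{U}W=h\nabla_{U}W$; likewise, since $X,Y$ are horizontal, $hX=X$ and $\nu Y=0$, whence $A_{X}Y=\nu\nabla_{X}Y$. In the same way the mixed terms give $T_{U}X=\nu\nabla_{U}X$ and $A_{X}U=h\nabla_{X}U$, which is exactly what one needs for the skew-adjointness identities.

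For the symmetry $T_{U}W=T_{W}U$, I would subtract the two specialized expressions and use that $\nabla$ is torsion-free: $T_{U}W-T_{W}U=h(\nabla_{U}W-\nabla_{W}U)=h[U,W]$. Because the fibres are semi-Riemannian submanifolds (axiom $S2$), the vertical distribution is integrable, so $[U,W]$ is vertical and $h[U,W]=0$, giving the claim. For the $A$-tensor I would argue in two stages. The torsion-free property gives at once $A_{X}Y-A_{Y}X=\nu(\nabla_{X}Y-\nabla_{Y}X)=\nu[X,Y]$, so it remains to show that $A$ is alternating, i.e. that the symmetric part $\nu(\nabla_{X}Y+\nabla_{Y}X)$ vanishes. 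Since $A$ is tensorial I may assume $X,Y$ are basic. For any vertical $V$, metric compatibility together with $g(X,V)=g(Y,V)=0$ gives $g(\nabla_{X}Y,V)=-g(Y,\nabla_{X}V)$ and $g(\nabla_{Y}X,V)=-g(X,\nabla_{Y}V)$; rewriting $\nabla_{X}V=[X,V]+\nabla_{V}X$ and using Lemma 1(iv) (so $[X,V]$ is vertical and pairs trivially with the horizontal $Y$) turns the sum into $-\big(g(Y,\nabla_{V}X)+g(X,\nabla_{V}Y)\big)=-V\,g(X,Y)$. By Lemma 1(i), $g(X,Y)=g'(X_{\ast},Y_{\ast})\circ\pi$ is constant along each fibre, so $V\,g(X,Y)=0$ and the symmetric part indeed vanishes. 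Combining the two stages yields $A_{X}Y=-A_{Y}X=\tfrac{1}{2}\nu[X,Y]$, and the remaining skew-adjoint relations (such as $g(T_{U}X,W)=-g(X,T_{U}W)$ and $g(A_{X}Y,U)=-g(Y,A_{X}U)$) follow by the same differentiate-the-metric device.

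The step I expect to be the only genuine obstacle is the vanishing of the symmetric part of $A$: it is the single identity that cannot be extracted from the definition and torsion-freeness alone, and it forces me to pass to basic fields and invoke both Lemma 1(iv) and the fibrewise constancy of $g(X,Y)$ from Lemma 1(i). Everything else is routine bookkeeping with the projections $h,\nu$ and the two defining properties of $\nabla$.
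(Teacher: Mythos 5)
Your proof is correct, but there is nothing in the paper to compare it against: the paper states this lemma as well known and simply cites O'Neill (\cite{oneill}, \cite{ONEILL2}) without giving any proof. Your write-up is essentially the classical O'Neill argument that those references contain: specialize the defining formulas (\ref{AT}), get the symmetry of $T$ from torsion-freeness plus integrability of the vertical distribution, get the skew part of $A$ from torsion-freeness, and kill the symmetric part of $A$ by pairing against vertical fields and using constancy of $g(X,Y)$ along fibres. Two small points deserve explicit mention in the semi-Riemannian setting. First, your pairing argument concludes $\nu(\nabla_X Y+\nabla_Y X)=0$ from $g(\nabla_X Y+\nabla_Y X,V)=0$ for all vertical $V$; this implication needs the vertical distribution to be non-degenerate, which is exactly what axiom $S2$ (the fibres are semi-Riemannian submanifolds) guarantees, and which would fail for a degenerate distribution, so it should be stated. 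Second, the lemma's final clause, $h(\nabla_U X)=h(\nabla_X U)=A_X U$ for basic $X$ and vertical $U$, is never explicitly concluded in your text, although every ingredient is present: you identified $A_X U=h\nabla_X U$ from the definition, and applying $h$ to $\nabla_X U=[X,U]+\nabla_U X$ together with Lemma 1(iv) (verticality of $[X,U]$) gives $h\nabla_X U=h\nabla_U X$. With those two remarks added, your proof is complete and is the standard one the paper implicitly relies on.
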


$i)T_{U}W=T_{W}U$,

$ii)A_{X}Y=-A_{Y}X=\frac{1}{2}\nu \left[ X,Y\right] .$

Moreover, if $X$ is basic and $U$ is vertical then $h(\nabla _{U}X)=h(\nabla
_{X}U)=A_{X}U.$ Notice that $T$ \ acts on the fibres as the second
fundamental form of the submersion and restricted to vertical vector fields
and it can be easily seen that $T=0$ is equivalent to the condition that the
fibres are totally geodesic.

We define the curvature tensor $R$ of $M$ by $R(E,F)=\nabla _{E}\nabla
_{F}-\nabla _{F}\nabla _{E}-\nabla _{\lbrack E,F]}$ for any vector fields $E$%
, $F$ on $M$. The pseudo-Riemannian curvature $(0,4)$-tensor is defined by%
\begin{equation*}
R(E,F,G,H)=g(R(E,F)G,H).
\end{equation*}

Let us recall the sectional curvature of pseudo-Riemannian manifolds for
nondegenerate planes. Let $M$ be a pseudo-Riemannian manifold and $P$ be a
non-degenerate tangent plane to $M$ at $p$. The number

\begin{equation*}
K_{X\wedge Y}=\frac{g(R(X,Y)Y,X)}{g(X,X)g(Y,Y)-g(X,Y)^{2}}
\end{equation*}%
is independent on the choice of basis $X,Y$ for $P$ and is called the
sectional curvature. We use notation $R_{ijkl}$ $%
=g(R(e_{i},e_{j})e_{k},e_{l}).$ Next, we can give the following lemma:

\begin{lemma}[\protect\cite{ONEILL2}]
Let $\pi :(M,g)\rightarrow (B,g^{\prime })$ be a pseudo-Riemannian
submersion. $K$ and $K^{B}$ denote the sectional curvatures\ in $M$ and $B$,
respectively. If $\ X,$ $Y$ are basic vector fields on $M,$ then%
\begin{equation}
K_{X_{\ast }\wedge Y_{\ast }}^{B}=K_{X\wedge Y}+\frac{3g(A_{X}Y,A_{X}Y)}{%
g(X,X)g(Y,Y)-g(X,Y)^{2}}.  \label{K...}
\end{equation}
\end{lemma}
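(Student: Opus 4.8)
The statement is O'Neill's sectional curvature formula for a semi-Riemannian submersion, and my plan is to derive it from the \emph{horizontal curvature identity} relating the curvature tensor of $M$ to that of $B$ along basic horizontal vectors, and then to specialise. A useful preliminary observation is that, since $X,Y$ are basic, Lemma 1 $(i)$ gives $g(X,X)=g^{\prime}(X_{\ast},X_{\ast})\circ\pi$ and similarly for $g(Y,Y)$ and $g(X,Y)$; hence the denominator $g(X,X)g(Y,Y)-g(X,Y)^{2}$ is exactly the $\pi$-lift of the corresponding denominator on $B$. It therefore suffices to compare numerators, i.e.\ to relate $g(R(X,Y)Y,X)$ to the lift of $g^{\prime}(R^{B}(X_{\ast},Y_{\ast})Y_{\ast},X_{\ast})$, where $R^{B}$ denotes the curvature tensor of $(B,g^{\prime})$.

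To produce the horizontal curvature identity I would compute $R(X,Y)Z$ for basic $X,Y,Z$ directly from $R(X,Y)Z=\nabla_{X}\nabla_{Y}Z-\nabla_{Y}\nabla_{X}Z-\nabla_{[X,Y]}Z$. The key reduction is the splitting $\nabla_{Y}Z=h\nabla_{Y}Z+\nu\nabla_{Y}Z$, in which $h\nabla_{Y}Z$ is the basic lift of $\nabla^{\ast}_{Y_{\ast}}Z_{\ast}$ by Lemma 1 $(iii)$, while $\nu\nabla_{Y}Z=A_{Y}Z$ by the definition (\ref{AT}) of $A$. Iterating $\nabla_{X}$ on this decomposition, projecting onto the horizontal distribution, and repeatedly invoking Lemma 1, the relation $h(\nabla_{U}X)=A_{X}U$ from Lemma 2, and the symmetries of Lemma 2, the horizontal component organises into the lift of $R^{B}(X_{\ast},Y_{\ast})Z_{\ast}$ plus terms quadratic in $A$. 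Pairing with a basic field $H$ yields the fundamental identity
\[
g^{\prime}(R^{B}(X_{\ast},Y_{\ast})Z_{\ast},H_{\ast})\circ\pi = g(R(X,Y)Z,H)-2g(A_{X}Y,A_{Z}H)+g(A_{Y}Z,A_{X}H)-g(A_{X}Z,A_{Y}H).
\]
None of these manipulations uses positive-definiteness, only the nondegeneracy of $g$, so the derivation carries over verbatim from the Riemannian to the semi-Riemannian setting.

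Finally I specialise to $Z=Y$ and $H=X$. By Lemma 2 $(ii)$, $A_{Y}Y=\tfrac{1}{2}\nu[Y,Y]=0$, so the middle term $g(A_{Y}Y,A_{X}X)$ vanishes; and the same Lemma gives $A_{Y}X=-A_{X}Y$, whence $-2g(A_{X}Y,A_{Y}X)=2g(A_{X}Y,A_{X}Y)$ and $-g(A_{X}Y,A_{Y}X)=g(A_{X}Y,A_{X}Y)$. The three correction terms therefore collapse to $3g(A_{X}Y,A_{X}Y)$, giving
\[
g^{\prime}(R^{B}(X_{\ast},Y_{\ast})Y_{\ast},X_{\ast})\circ\pi = g(R(X,Y)Y,X)+3g(A_{X}Y,A_{X}Y).
\]
Dividing both sides by the common denominator identified in the first step yields the claimed formula (\ref{K...}). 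The only genuinely laborious step is the derivation of the four-vector identity above: the bookkeeping of the horizontal and vertical projections must be carried out carefully, and one must track the curvature sign convention $R(E,F)=\nabla_{E}\nabla_{F}-\nabla_{F}\nabla_{E}-\nabla_{[E,F]}$ so that the correction term emerges with the plus sign appearing in (\ref{K...}). Once the identity is in hand, the specialisation is entirely routine.
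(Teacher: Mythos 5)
Your proposal is correct, but there is nothing in the paper to compare it with: the paper states this lemma as a known result, citing O'Neill's book \cite{ONEILL2}, and gives no proof at all, so yours is the only argument on the table. What you give is the standard derivation, and the parts you carry out in detail are right. In the paper's conventions ($R(E,F)=\nabla _{E}\nabla _{F}-\nabla _{F}\nabla _{E}-\nabla _{\lbrack E,F]}$ and $K_{X\wedge Y}=g(R(X,Y)Y,X)/\left( g(X,X)g(Y,Y)-g(X,Y)^{2}\right) $), the four-vector identity you quote carries the correct signs: specializing it at $Z=Y$, $H=X$ and using $A_{Y}Y=0$, $A_{Y}X=-A_{X}Y$ from Lemma 2 gives
\begin{equation*}
g^{\prime }(R^{B}(X_{\ast },Y_{\ast })Y_{\ast },X_{\ast })\circ \pi
=g(R(X,Y)Y,X)+3g(A_{X}Y,A_{X}Y),
\end{equation*}
and the denominators on the two sides agree by Lemma 1 $(i)$, so (\ref{K...}) follows. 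Your remark that only nondegeneracy of $g$, never positive definiteness, enters the computation is exactly the right justification for transferring the Riemannian argument to the semi-Riemannian setting (one should add that the horizontal planes involved must be nondegenerate for the sectional curvatures to be defined at all; in this paper that is automatic because $B$ is Riemannian, so the horizontal distribution is positive definite). The only soft spot is that the four-vector identity itself---the heart of the proof---is sketched rather than derived: a self-contained write-up would have to expand ``iterating $\nabla _{X}$ and projecting'' into the actual computation, which also uses two facts you never name explicitly, namely $\nu \lbrack X,Y]=2A_{X}Y$ (a restatement of Lemma 2 $(ii)$) and the skew-adjointness $g(A_{X}E,F)=-g(E,A_{X}F)$ of each operator $A_{X}$, needed to turn terms such as $g(\nabla _{Y}(A_{X}Y),X)$ and $g(\nabla _{A_{X}Y}Y,X)$ into multiples of $g(A_{X}Y,A_{X}Y)$. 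Since the identity you invoke is precisely O'Neill's fundamental curvature equation, valid verbatim for semi-Riemannian submersions, this is a gap of exposition rather than of substance.
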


In \cite{Es}, Escobales gave a classification of Riemannian submersions with
connected totally geodesic fibres from a sphere to a Riemannian manifold and
then Ranjan \cite{Ra} dropped Escobales's classification into three
categories: $(a)$ $S^{2n+1}\rightarrow CP^{n},n\geq 1,$ with the fibres $%
S^{1};$ $(b)$ $S^{4n+3}\rightarrow HP^{n}$,$n\geq 1,$ with the fibres $%
S^{3}; $ $(c)$ $S^{8n+7}\rightarrow CaP^{n}$, $n=1,2$ with the fibres $S^{7}$%
, where $CP^{n}$, $HP^{n}$ and $CaP^{n}$ are complex projective,
quaternionic projective and Cayley projective space, respectively.

In the Lorentz space case,\ Magid \cite{magid} proved that if $\pi
:H_{1}^{2n+1}(c)\rightarrow B^{2n}$ be a pseudo-Riemannian submersion with
totally geodesic fibres onto\ a Riemannian manifold then, $B^{2n}$ is a
Kaehler manifold holomorphically isometric to complex hyperpolic space $%
CH^{n}(4c).$

In \cite{Ba} Baditou and Ianu\c{s} generalized Magid's result and classified
the pseudo-Riemannian submersions with connected complex totally geodesic
fibres from a complex pseudo hyperbolic space onto a Riemannian manifold.
These pseudo-Riemannian submersions are observed as mainly three categories
: $(1)$ $H_{1}^{2m+1}\rightarrow 
\mathbb{C}
H^{m},$ $(2)$ $H_{3}^{4m+3}\rightarrow H(H^{m})$ or $(3)$ $%
H_{7}^{15}\rightarrow H^{8}(-4)$, where $%
\mathbb{C}
H^{m}$and $H(H^{m})$ are complex hyperbolic space and \ quaternionic
hyperbolic space, respectively. Then Baditoiu \cite{Ba1} improved these
results under the assumption that the dimension of the fibres is less than
or equal to three.

Recently, Baditoiu \cite{Ba2} generalized previous results without any
assumption for dimension of the fibres and proved that any pseudo-Riemannian
submersions with connected, totally geodesic fibres from a real pseudo
hyperbolic space onto a pseudo-Riemannian manifold is equivalent to one of
the (para) Hopf pseudo-Riemannian submersions: $\ (i)$ $H_{2t+1}^{2m+1}%
\rightarrow 
\mathbb{C}
H_{t}^{m},0\leq t\leq m,$ $(ii)$ $H_{m}^{2m+1}\rightarrow AP^{m},$ $(iii)$ $%
H_{4t+3}^{4m+3}\rightarrow H(H_{t}^{m}),0\leq t\leq m,$ $(iv)$ $%
H_{2m+1}^{4m+3}\rightarrow BP^{m},$ $(v)$ $H_{15}^{15}\rightarrow
H_{8}^{8}(-4),$ $(vi)$ $H_{7}^{15}\rightarrow H_{4}^{8}(-4)$ or $(vii)$ $%
H_{7}^{15}\rightarrow H_{4}^{8}(-4),$ where $%
\mathbb{C}
H_{t}^{m}$ and $H(H_{t}^{m})$ are the indefinite complex and quaternionic
pseudo-hyperbolic spaces of holomorphic, respectively, quaternionic
curvature $-4$; $AP^{m}$ is the para-complex projective space of real
dimension $2m$, signature $(m,m)$ and para-holomorphic curvature $-4$; $%
BP^{m}$ is the para-quaternionic projective space of real dimension $4m$,
signature $(2m,2m)$ and para-quaternionic curvature $-4$.

In summary, for three dimensional, these (para) pseudo-Riemannian
submersions with connected, totally geodesic fibres\ fall into one of the
following cases:

$(a_{1})$ $\pi :$ $S^{3}(1)\rightarrow CP^{1}$, $(a_{2})$ $\pi :$ $%
H_{1}^{3}(-1)\rightarrow H^{2}(-4)=CH^{1},$ $(a_{3})$ $\pi
:H_{1}^{3}(-1)\rightarrow H_{1}^{2}(-4)=AH^{1}$, $(a_{4})$ $\pi
:H_{3}^{3}(-1)\rightarrow H_{2}^{2}(-4)=CH_{1}^{1}$

We will finish this subsection by the following Theorem of \ Uniqueness:

\begin{theorem}[\protect\cite{Ba2}]
\bigskip Let $\pi _{1},\pi _{2}:H_{l}^{a}\rightarrow B$ be two
pseudo-Riemannian submersions with connected, totally geodesic fibres from a
pseudo-hyperbolic space onto a pseudo-Riemannian manifold. Then there exists
an isometry $f:H_{l}^{a}\rightarrow H_{l}^{a}$ such that $\pi _{2}\circ
f=\pi _{1}$. In particular, $\pi _{1}$ and $\pi _{2}$ are equivalent.
\end{theorem}

\subsection{Biharmonic maps}

Let $M^{m}$ and $B^{n}$ be pseudo-Riemannian manifolds of dimensions $m$ and 
$n$, respectively, and $\varphi :$ $M^{m}\rightarrow B^{n}$ a smooth map. We
denote by $\nabla ^{M}$ and $\nabla ^{B}$ the Levi-Civita connections on $%
M^{m}$ and $B^{n}$, respectively. Then the tension field $\tau (\varphi )$
is a section of the vector bundle $\varphi ^{\ast }TB^{n}$ defined by%
\begin{equation*}
\tau (\varphi )=\text{trace}(\nabla ^{\varphi }d\varphi
)=\dsum\limits_{i=1}^{m}g(e_{i},e_{i})(\nabla _{e_{i}}^{\varphi }d\varphi
(e_{i})-d\varphi (\nabla _{e_{i}}e_{i})).
\end{equation*}%
Here $\nabla ^{\varphi }$ and $\left\{ e_{i}\right\} $ denote the induced
connection by $\varphi $ on the bundle $\varphi ^{\ast }TB^{n}$, which is
the pull-back of $\nabla ^{B}$, and a local orthonormal frame field of $%
M^{m} $, respectively. A smooth map $\varphi $ is called a harmonic map if
its tension field vanishes. A map $\varphi $ is called biharmonic if it is a
critical point of the energy 
\begin{equation*}
E_{2}(\varphi )=\frac{1}{2}\int_{\Omega }g(\tau (\varphi ),\tau (\varphi
)dv_{g}
\end{equation*}%
for every compact domains $\Omega $ of $M^{m}$, where $dv_{g}$ is the volume
form of $M^{m}.$ Using same argument in Riemannian case, the bitension field
can be defined by

\begin{equation}
\tau _{2}(\varphi )=\dsum\limits_{i=1}^{m}g(e_{i},e_{i})((\nabla
_{e_{i}}^{\varphi }\nabla _{e_{i}}^{\varphi }-\nabla _{\nabla
_{e_{i}}e_{i}}^{\varphi })\tau (\varphi )-R^{B}(d\varphi (e_{i}),\tau
(\varphi ))d\varphi (e_{i})),  \label{1}
\end{equation}%
where $R^{B}$ is the curvature tensor of $B^{n}$ (see \cite{DONGOU}, \cite%
{JI}, \cite{sasahara}). A smooth map $\varphi $ is a biharmonic map (or $2$%
-harmonic map) if its bitension field vanishes (see \cite{JI}, \cite%
{sasahara}). By definition, a harmonic map is clearly biharmonic map. Non
harmonic maps are called proper biharmonic maps.

\section{THE THEOREMS AND PROOFS}

In this section, we will prove our classification Theorem and corollaries.
Firstly, we will recall well known theorems:

\begin{theorem}[\protect\cite{falcitelli}]
A pseudo-Riemannian submersion $\pi :(M,g)\rightarrow (B,g^{^{\prime }})$ is
a harmonic map if and only if each fibre is a minimal submanifold.
\end{theorem}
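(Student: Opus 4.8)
The plan is to compute the tension field $\tau(\pi)$ directly from its definition using an orthonormal frame adapted to the submersion, and to show that it reduces to (minus) the $\pi_*$-image of the mean curvature vector field of the fibres. First I would fix a local orthonormal frame $\{U_1,\dots,U_k,X_1,\dots,X_n\}$ on $M$, where the $U_a$ are vertical and the $X_j$ are \emph{basic} horizontal vector fields, with signs $g(U_a,U_a)=\pm1$ and $g(X_j,X_j)=\pm1$. Splitting the trace in the definition of the tension field into its vertical and horizontal parts gives
\[
\tau(\pi)=\sum_{a}g(U_a,U_a)\big(\nabla^{\pi}_{U_a}\pi_*U_a-\pi_*\nabla_{U_a}U_a\big)+\sum_{j}g(X_j,X_j)\big(\nabla^{\pi}_{X_j}\pi_*X_j-\pi_*\nabla_{X_j}X_j\big).
\]

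For the vertical sum, since $\pi_*U_a=0$ the first term drops out, while $\pi_*\nabla_{U_a}U_a=\pi_*(h\nabla_{U_a}U_a)=\pi_*(T_{U_a}U_a)$ by the definition (\ref{AT}) of the O'Neill tensor $T$ with both arguments vertical. Hence the vertical contribution is $-\pi_*\big(\sum_a g(U_a,U_a)\,T_{U_a}U_a\big)$. By the remark following Lemma 2, $T$ acts on each fibre as its second fundamental form, so the horizontal field $\sum_a g(U_a,U_a)\,T_{U_a}U_a$ is precisely the mean curvature vector field of the fibres. For the horizontal sum I would exploit that each $X_j$ is basic: then $\pi_*X_j=X_{j*}$ is $\pi$-related to a field on $B$, so that the induced connection satisfies $\nabla^{\pi}_{X_j}\pi_*X_j=(\nabla^{*}_{X_{j*}}X_{j*})\circ\pi$, whereas Lemma 1(iii) gives $\pi_*\nabla_{X_j}X_j=\pi_*(h\nabla_{X_j}X_j)=\nabla^{*}_{X_{j*}}X_{j*}$. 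These two terms cancel, so the entire horizontal sum vanishes and $\tau(\pi)=-\pi_*\big(\sum_a g(U_a,U_a)\,T_{U_a}U_a\big)$.

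Finally, since $\pi_*$ restricts to a linear isometry on the horizontal space and the mean curvature field is horizontal, $\pi_*$ can annihilate it only when it already vanishes. Therefore $\tau(\pi)=0$ if and only if the mean curvature vector of every fibre is zero, i.e. if and only if each fibre is minimal, which is the assertion. The step demanding the most care is the horizontal cancellation: one must justify replacing the induced connection $\nabla^{\pi}$ by the base connection $\nabla^{*}$ along $\pi$-related fields, and keep the index signs $g(e_i,e_i)$ consistent throughout. No genuine obstruction arises, however, because the basic-field formalism of Lemma 1 is designed precisely to make this identification routine.
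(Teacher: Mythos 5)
Your proposal is correct. Note that the paper itself offers no proof of this statement: it is quoted as a known theorem from the reference \cite{falcitelli} (Falcitelli--Ianus--Pastore), so there is no in-paper argument to compare against. Your argument is the standard one for this result: splitting the trace in $\tau(\pi)$ over an adapted frame, killing the horizontal contribution via the basic-field identification $h\nabla_{X_j}X_j \leftrightarrow \nabla^{*}_{X_{j*}}X_{j*}$ together with the pullback-connection identity $\nabla^{\pi}_{X_j}(X_{j*}\circ\pi)=(\nabla^{*}_{X_{j*}}X_{j*})\circ\pi$, and recognizing the vertical contribution as $-\pi_{*}$ of the signed trace of $T$ on the fibre. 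The final step is also sound, since the kernel of $\pi_{*}$ is exactly the vertical distribution, so $\pi_{*}$ is injective on horizontal vectors. Two harmless quibbles: the field $\sum_{a}g(U_a,U_a)\,T_{U_a}U_a$ is the trace of the second fundamental form, i.e.\ $(\dim\text{fibre})$ times the mean curvature vector, which of course does not affect the equivalence of vanishing; and in the semi-Riemannian setting you should remark that the adapted orthonormal frame exists because axiom $S2$ makes the fibres nondegenerate submanifolds, so both the vertical and horizontal distributions admit orthonormal frames.
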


\begin{theorem}[\protect\cite{Ba1},\protect\cite{magid},\protect\cite{Ra},%
\protect\cite{Es}]
Let $\pi :(M_{r}^{3}(c),g)\rightarrow (B_{s}^{2},g^{^{\prime }})$ be a
(para) pseudo-Riemannian submersion with connected totally geodesic fibres,
where $0\leq r\leq 3,$ $0\leq s\leq 2$ and $c\neq 0.$In summary, for three
dimensional, these (para) pseudo-Riemannian submersions with connected,
totally geodesic fibres. Then $\pi $ is one of the following types:

\begin{tabular}{|l|l|}
\hline
\ \ \ \ \ \ \ \ \ \ Timelike Fiber & \ \ \ \ \ \ \ \ \ \ \ \ \ \ \ \ \ \ \ \
\ Spacelike Fiber \\ \hline
$H_{3}^{3}(-1)\overset{\pi }{\rightarrow }H_{2}^{2}(-4)=CH_{1}^{1};$\cite%
{Ba1} & $H_{1}^{3}(-1)\overset{\pi }{\rightarrow }H_{1}^{2}(-4)=AH^{1};$\cite%
{Ba1} \\ \hline
$H_{1}^{3}(-1)\overset{\pi }{\rightarrow }H^{2}(-4)=CH^{1};$\cite{magid} & $%
S^{3}(1)\overset{\pi }{\rightarrow }S^{2}\left( \frac{1}{2}\right) =CP^{1};$%
\cite{Ra},\cite{Es}. \\ \hline
\end{tabular}
\end{theorem}

We will report following theorems which give us the motivation to study on
this paper.

\begin{theorem}[\protect\cite{CHEN}]
Let $\ x:M\rightarrow E_{s}^{3}$ $(s=0,1)$ be a biharmonic isometric
immersion of a Riemannian surface $M$ into $E_{s}^{3}$ $.$Then $x$ is
harmonic.

\begin{theorem}[\protect\cite{ZHANG}]
If $M$ is a complete biharmonic space-like surface in $S_{1}^{3}$ or $%
R_{1}^{3},$ then it must be totally geodesic, i.e. $S^{2}$ or $R^{2}.$
\end{theorem}
\end{theorem}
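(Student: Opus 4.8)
The plan is to treat the isometric immersion $x:M\rightarrow N_{1}^{3}(c)$ as a map and expand its bitension field, where $c=0$ corresponds to $R_{1}^{3}$ and $c=1$ to $S_{1}^{3}$. Since $M$ is space-like, its induced metric is positive definite, so the rough Laplacian $\Delta$ on $M$ is a genuine elliptic operator, while the normal bundle is a timelike line bundle spanned by a unit normal $\xi$ with $g(\xi,\xi)=-1$. Writing $A$ for the shape operator and $H=\frac{1}{2}\mathrm{tr}\,A$ for the mean curvature, I would use the constant curvature of the ambient space to split $\tau^{2}(x)=0$ into its normal and tangential components. As in the codimension-one theory, this yields a normal equation of the schematic form $\Delta H+H(|A|^{2}-2c)=0$ together with a tangential equation $A(\nabla H)+H\,\nabla H=0$; the precise constants and signs follow from substituting the ambient curvature tensor $R(X,Y)Z=c(g(Y,Z)X-g(X,Z)Y)$ into (\ref{1}) and invoking the Gauss and Codazzi formulas.

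The crux is to prove that $H$ is constant and then that $H\equiv 0$. The tangential equation says that wherever $\nabla H\neq 0$, the vector $\nabla H$ is an eigenvector of $A$; since $\mathrm{tr}\,A=2H$ the two principal curvatures are then $-H$ and $3H$, so $|A|^{2}=10H^{2}$ on such a set and the normal equation collapses to a single scalar equation in $H$ alone. The difficulty is that $M$ is only assumed complete, not compact, so one cannot simply integrate the resulting identity. I would instead derive a differential inequality for $H^{2}$ (or for $|A|^{2}$) and apply a Liouville/maximum-principle argument valid on complete manifolds, such as the Omori--Yau maximum principle, after controlling the intrinsic curvature through the Gauss equation, which bounds $K$ in terms of $c$ and $\det A$. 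Extracting constancy of $H$ from completeness alone is where I expect the main obstacle to lie, since the required curvature bound is exactly what completeness must be leveraged to supply.

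Once $H$ is constant, the tangential equation is satisfied trivially and the normal equation reduces to $H(|A|^{2}-2c)=0$. If $H\neq 0$ then $|A|^{2}=2c$ is constant; for $c=0$ this gives $A=0$ and hence $H=0$, a contradiction, while for $c=1$ the remaining constant-mean-curvature case is excluded by examining the principal curvatures together with the space-like condition and completeness. Thus $H\equiv 0$ and $M$ is a complete maximal space-like surface. Finally I would invoke the Calabi--Bernstein type rigidity for maximal space-like surfaces: a complete maximal space-like surface in $R_{1}^{3}$ is a space-like plane $R^{2}$, and a complete maximal space-like surface in $S_{1}^{3}$ is totally geodesic, namely a round $S^{2}$. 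This yields the asserted conclusion that $M$ must be totally geodesic.
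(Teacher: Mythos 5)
These two theorems are quoted by the paper from \cite{CHEN} and \cite{ZHANG} purely as motivation; the paper contains no proof of either, so there is no internal argument to compare yours against and it must stand on its own. As written, it has one outright sign error and one essential unproved step. The sign error: you transcribed the Riemannian hypersurface equations without accounting for the fact that the normal of a space-like surface is timelike. Writing $\epsilon=g(\xi,\xi)=-1$, both $\vec H=\epsilon H\xi$ and $h(X,Y)=\epsilon\, g(AX,Y)\xi$ carry this sign, and expanding (\ref{1}) gives for the normal component $\operatorname{tr}\nabla^{2}H-\epsilon H|A|^{2}+2cH=0$, i.e.\ in the space-like case $\operatorname{tr}\nabla^{2}H=-H\left(|A|^{2}+2c\right)$; the Gauss equation likewise flips to $K=c+\epsilon\det A=c-\det A$. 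Your equation $\Delta H+H\left(|A|^{2}-2c\right)=0$ is the $\epsilon=+1$ (Riemannian) one, and the relative sign between the $|A|^{2}$ and $c$ terms is convention-independent, so this is not a matter of conventions. It is also not cosmetic: with the correct sign, once $H$ is constant the de Sitter case ends instantly, since $H\left(|A|^{2}+2c\right)=0$ and $|A|^{2}+2c>0$ force $H=0$; with your sign, the constant-mean-curvature case $|A|^{2}=2c$ cannot be excluded by any local argument --- in the Riemannian model it genuinely occurs (the small hypersphere in $S^{3}$ is proper biharmonic) --- so your sentence ``excluded by examining the principal curvatures together with the space-like condition and completeness'' is a placeholder for a step that, with the equation you wrote down, cannot be carried out.

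The essential gap is the constancy of $H$, which you yourself flag as the main obstacle and then leave to an unexecuted Omori--Yau/Liouville argument. That is the wrong tool: constancy of $H$ is a purely local fact here, and indeed the Chen--Ishikawa theorem carries no completeness hypothesis at all, so no completeness-based argument could ever prove it. The classical route (Jiang \cite{JI}, Chen--Ishikawa \cite{CHEN}) runs as follows: on the open set where $H\neq0$ and $\nabla H\neq0$, your tangential equation makes $e_{1}=\nabla H/|\nabla H|$ principal with eigenvalue $-H$, so the principal curvatures are $-H,3H$, $|A|^{2}=10H^{2}$ and $e_{2}(H)=0$; the Codazzi equations then give $\nabla_{e_{1}}e_{1}=\nabla_{e_{1}}e_{2}=0$ and $g(\nabla_{e_{2}}e_{1},e_{2})=-3e_{1}(H)/(4H)$; substituting this into the intrinsic curvature of the frame, the Gauss equation, and the (correctly signed) normal equation, and eliminating $e_{1}e_{1}(H)$, yields in the space-like case the identity $|\nabla H|^{2}=-14H^{4}$, an immediate contradiction (in the Riemannian case the same elimination gives $|\nabla H|^{2}=+14H^{4}$, and one further differentiation produces the contradiction). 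Hence $H$ is constant, and the normal equation kills it. Completeness enters only afterwards, and only in Zhang's theorem \cite{ZHANG}, exactly where you do invoke it correctly: the Calabi--Bernstein-type rigidity upgrading a complete maximal space-like surface in $R_{1}^{3}$ or $S_{1}^{3}$ to a totally geodesic $R^{2}$ or $S^{2}$.
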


\begin{theorem}[\protect\cite{ze-ye}]
Let $\pi :(M^{3}(c),g)\rightarrow (B^{2},g^{^{\prime }})$ be a Riemannian
submersion from a space form of constant sectional curvature $c$. Then, $\pi 
$ is biharmonic if and only if it is harmonic, and this holds if and only if
it is a harmonic morphism.
\end{theorem}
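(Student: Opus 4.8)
The plan is to treat the two biconditionals separately, the equivalence with ``harmonic morphism'' being immediate and the equivalence ``biharmonic $\Leftrightarrow$ harmonic'' being the real content. For the first, I would invoke the Fuglede--Ishihara characterization: a smooth map is a harmonic morphism exactly when it is harmonic and horizontally weakly conformal. A Riemannian submersion is by axiom $S3$ a linear isometry on each horizontal space, hence horizontally conformal with dilation identically $1$; so it is a harmonic morphism if and only if it is harmonic. No computation is needed here.

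Next I would reduce harmonicity to the vanishing of a single vector field. Since $\dim M-\dim B=1$ the fibres are curves; fix a local orthonormal frame $\{e_{1},e_{2},e_{3}\}$ adapted to $\pi$, with $e_{1},e_{2}$ basic horizontal and $e_{3}$ a vertical unit field, and set $\mu=T_{e_{3}}e_{3}=h\nabla_{e_{3}}e_{3}$, the fibre mean-curvature vector. In $\tau(\pi)$ the two horizontal terms cancel: by Lemma 1$(iii)$, $h\nabla_{e_{a}}e_{a}$ is $\pi$-related to $\nabla^{\ast}_{(e_{a})_{\ast}}(e_{a})_{\ast}$ and so matches $\nabla^{\pi}_{e_{a}}d\pi(e_{a})$, while $d\pi$ kills every vertical vector. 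Hence $\tau(\pi)=-d\pi(\mu)$, and since $d\pi$ is an isomorphism on horizontal vectors, $\pi$ is harmonic iff $\mu=0$ iff (the fibres being $1$-dimensional) the fibres are geodesic, in agreement with Falcitelli's criterion above. The theorem thus reduces to showing that $\tau^{2}(\pi)=0$ forces $\mu=0$.

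For the main implication I would record the integrability data of the adapted frame: write $A_{e_{1}}e_{2}=\lambda e_{3}$, whence skew-symmetry of $A$ gives $A_{e_{1}}e_{3}=-\lambda e_{2}$ and $A_{e_{2}}e_{3}=\lambda e_{1}$, and write $\mu=a\,e_{1}+b\,e_{2}$. O'Neill's identity (\ref{K...}) with $K_{e_{1}\wedge e_{2}}=c$ gives the Gauss curvature of the base as $K^{B}=c+3\lambda^{2}$. Substituting $\tau(\pi)=-d\pi(\mu)$ into the bitension formula (\ref{1}) and using that $B$ is a surface (so $R^{B}(X,Y)Z=K^{B}(g(Y,Z)X-g(X,Z)Y)$), the curvature part collapses cleanly: only the $i=1,2$ terms contribute, and $-\sum_{i}R^{B}(d\pi(e_{i}),\tau(\pi))d\pi(e_{i})=(c+3\lambda^{2})\,\tau(\pi)$. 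Thus, with the convention of (\ref{1}), $\tau^{2}(\pi)=\bigl(\sum_{i}\nabla^{\pi}_{e_{i}}\nabla^{\pi}_{e_{i}}-\nabla^{\pi}_{\nabla_{e_{i}}e_{i}}\bigr)\tau(\pi)+(c+3\lambda^{2})\,\tau(\pi)$, and biharmonicity is the vanishing of this.

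The hard part is the rough-Laplacian term $\bigl(\sum_{i}\nabla^{\pi}_{e_{i}}\nabla^{\pi}_{e_{i}}-\nabla^{\pi}_{\nabla_{e_{i}}e_{i}}\bigr)\tau(\pi)$. I would first compute all connection coefficients $\nabla_{e_{i}}e_{j}$ in terms of $a,b,\lambda$ and their frame-derivatives --- using Lemma 2 for the $A$- and $T$-pieces and the $\pi$-relatedness of Lemma 1 to transfer derivatives between $M$ and $B$ --- and then iterate to evaluate $\nabla^{\pi}_{e_{i}}\nabla^{\pi}_{e_{i}}d\pi(\mu)$ and project onto $d\pi(e_{1})$ and $d\pi(e_{2})$. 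The decisive difficulty I anticipate is the bookkeeping: keeping track of the fibre-direction ($e_{3}$) derivatives of $a,b,\lambda$ and of the cross terms generated by $A$, so that the two scalar equations $\tau^{2}(\pi)=0$ emerge in a form in which the $(c+3\lambda^{2})$ curvature contribution combines with the Laplacian and the $A$-terms to leave no room for a nonzero $\mu$, independently of the sign of $c$. Once this pointwise system is pinned down it forces $\mu=0$, hence $\tau(\pi)=0$, which together with the first paragraph completes all three equivalences.
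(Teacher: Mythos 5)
Your first two paragraphs are sound and match the framework the paper itself uses: the Fuglede--Ishihara argument for ``harmonic $\Leftrightarrow$ harmonic morphism'' is the standard one, the reduction $\tau(\pi)=-d\pi(\mu)$ is the paper's equation (\ref{6}) (your $(a,b)$ are the paper's $(k_{1},k_{2})$ up to sign, your $\lambda$ its $\sigma$), and the collapse of the curvature term to $K^{B}\tau(\pi)$ with $K^{B}=c+3\lambda^{2}$ from (\ref{K...}) is correct in the Riemannian signature. (Note the paper does not prove this Theorem; it quotes it from \cite{ze-ye} and proves the Lorentzian analogue, its Theorem 7, by exactly this strategy, so that proof is the right benchmark.) The problem is that your proposal stops precisely where the theorem begins. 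The step you defer as ``bookkeeping'' --- that the pointwise system $\tau^{2}(\pi)=0$ leaves no room for nonzero $\mu$ --- is the entire content, and it is \emph{not} bookkeeping: it cannot follow from frame manipulation alone, because the statement is false without the space-form hypothesis. The paper's Example 1 (adapted from \cite{L}, with a Riemannian counterpart in \cite{ze-ye}) is a proper biharmonic submersion from a warped $3$-manifold; that example carries an adapted frame, integrability data, and the identity relating $K^{B}$ to the horizontal sectional curvature, i.e.\ everything you have written down, yet $\mu\neq 0$. So whatever forces $\mu=0$ must use the constancy of \emph{all} sectional curvatures of $M$, not just $K_{e_{1}\wedge e_{2}}=c$, and your plan never invokes this.

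Concretely, three ingredients are missing, corresponding to the paper's Lemma 4, Lemma 5 and the proof of Theorem 7. First, the full list of constant-curvature identities (\ref{12}) (all components $R^{M}_{ijkl}$), combined with the Jacobi identity (\ref{11,5}), is what proves that the integrability data are constant along the fibres; without this you cannot control the $e_{3}$-derivatives in the rough Laplacian. Second, a rotation of the basic frame (Lemma 5) normalizes $k_{2}=0$; note also that your data $(a,b,\lambda)$ are incomplete, since the connection coefficients involve the base structure functions $f_{1},f_{2}$ of (\ref{2}) as independent quantities, and these are where the case-splitting lives. Third --- the heart of the proof --- the identity $R^{M}_{1323}=0$ yields $k_{1}f_{1}=0$ (equation $a_{3}$ of (\ref{13})), splitting into $k_{1}=0$ (harmonic, done) and $f_{1}=0$; in the latter case biharmonicity gives the polynomial relation (\ref{16}), $k_{1}^{2}=-3\sigma^{2}-3c$, and applying $e_{1}$ twice while eliminating derivatives via $a_{1}$, $a_{2}$ of (\ref{13}) produces the further relations (\ref{17}) and (\ref{18}); the three relations are jointly inconsistent unless $k_{1}=\sigma=c=0$, the desired contradiction. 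Without this differentiation-and-elimination argument, or a substitute that uses the constant-curvature identities with comparable force, your final sentence is an assertion of the theorem rather than a proof of it.
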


Let $\pi :(M_{r}^{3},g)\rightarrow (B_{s}^{2},g^{^{\prime }})$ be a
pseudo-Riemannian submersion where $0\leq r\leq 3,$ $0\leq s\leq 2$. \ Let
us consider a local pseudo orthonormal frame $\{e_{1},e_{2},e_{3}\}$ such
that $e_{1},e_{2}$ are basic and $e_{3}$ is vertical . Then, it is well
known (see \cite{oneill}) that $\left[ e_{1},e_{3}\right] $ and $\left[
e_{2},e_{3}\right] $ are vertical and $\left[ e_{1},e_{2}\right] $ is $\pi $%
-related to $\left[ \varepsilon _{1},\varepsilon _{2}\right] $, where $%
\left\{ \varepsilon _{1},\varepsilon _{2}\right\} $ is a pseudo orthonormal
frame in the base manifold.

Let $\{e_{1},e_{2},e_{3}\}$ be an orthonormal frame adapted to with $e_{3}$
being vertical where $g(e_{i},e_{i})=\delta _{i}=\mp 1.$ If we assume that%
\begin{equation}
\left[ \varepsilon _{1},\varepsilon _{2}\right] =L_{1}\varepsilon
_{1}+L_{2}\varepsilon _{2},  \label{2}
\end{equation}%
for $L_{1},$ $L_{2}\in C^{\infty }(B)$ and use the notations $%
l_{i}=L_{i}\circ \pi ,$ $i=1,2.$ Then, we have%
\begin{eqnarray}
\left[ e_{1},e_{3}\right] &=&\lambda e_{3,}  \notag \\
\left[ e_{2},e_{3}\right] &=&\mu e_{3,}  \label{3} \\
\left[ e_{1},e_{2}\right] &=&l_{1}e_{1}+l_{2}e_{2}-2\sigma e_{3.}  \notag
\end{eqnarray}%
where $\lambda ,$ $\mu $ and $\sigma $ $\in C^{\infty }(M).$ Here $l_{1}$, $%
l_{2}$, $\lambda $, $\mu $ and $\sigma $ are the integrability functions of
the adapted frame of the pseudo-Riemannian submersion $\pi .$

\begin{proposition}
Let $\pi :(M_{r}^{3},g)\rightarrow (B_{s}^{2},g^{^{\prime }})$ be a
pseudo-Riemannian submersion with the adapted frame $\left\{
e_{1},e_{2},e_{3}\right\} $ and the integrability functions $l_{1}$, $l_{2}$%
, $\lambda $, $\mu $ and \ $\sigma .$ Then, the pseudo-Riemannian submersion 
$\pi $ is biharmonic if and only if%
\begin{eqnarray}
\Delta ^{M}\lambda &=&-\delta _{2}l_{1}e_{1}(\mu )-\delta _{2}e_{1}(\mu
l_{1})-\delta _{2}l_{2}e_{2}(\mu )-\delta _{2}e_{2}(\mu l_{2})  \notag \\
&&+\delta _{2}\lambda \mu l_{1}+\delta _{2}\mu ^{2}l_{2}+\lambda \left\{
\delta _{2}l_{1}^{2}+\delta _{1}l_{2}^{2}-\delta _{1}\delta
_{2}K^{B}\right\} ,  \label{4} \\
\Delta ^{M}\mu &=&\delta _{1}l_{1}e_{1}(\lambda )+\delta _{1}e_{1}(\lambda
l_{1})+\delta _{1}l_{2}e_{2}(\lambda )+\delta _{1}e_{2}(\lambda l_{2}) 
\notag \\
&&-\delta _{1}\lambda \mu l_{2}-\delta _{1}\lambda ^{2}l_{1}+\mu \left\{
\delta _{2}l_{1}^{2}+\delta _{1}l_{2}^{2}-\delta _{1}\delta
_{2}K^{B}\right\} ,  \notag
\end{eqnarray}%
\textit{where }$K^{B}=R_{1221}^{B}\circ \pi =\delta _{2}e_{1}(l_{2})-\delta
_{1}e_{2}(l_{1})-\delta _{1}l_{1}^{2}-\delta _{2}l_{2}^{2}$\textit{\ is the
Gauss curvature of Riemannian manifold (}$B_{s}^{2},g^{^{\prime }})$.
\end{proposition}

\begin{proof}
Let \ $\nabla $ denote the Levi-Civita connection of the pseudo-Riemannian
manifold $(M_{r}^{3},g).$ Using (\ref{3}), Koszul formula and after a
straightforward computation, we have%
\begin{eqnarray}
\nabla _{e_{1}}e_{1} &=&-\delta _{1}\delta _{2}l_{1}e_{2,}\text{ \ \ }\nabla
_{e_{1}}e_{2}=l_{1}e_{1}-\sigma e_{3},\text{ \ }  \notag \\
\text{\ }\nabla _{e_{1}}e_{3} &=&\delta _{2}\delta _{3}\sigma
e_{2},~~~~~\nabla _{e_{2}}e_{1}=-l_{2}e_{2}+\sigma e_{3},  \notag \\
\text{ \ \ }\nabla _{e_{2}}e_{2} &=&\delta _{1}\delta _{2}l_{2}e_{1},\text{
\ ~~\ }\nabla _{e_{2}}e_{3}=-\delta _{1}\delta _{3}\sigma e_{1},  \label{5}
\\
\nabla _{e_{3}}e_{1} &=&\delta _{2}\delta _{3}\sigma e_{2}-\lambda e_{3},%
\text{ \ \ }\nabla _{e_{3}}e_{2}=-\delta _{1}\delta _{3}\sigma e_{1}-\mu
e_{3},\text{ }  \notag \\
\text{\ \ }\nabla _{e_{3}}e_{3} &=&\delta _{1}\delta _{3}\lambda
e_{1}+\delta _{2}\delta _{3}\mu e_{2}.  \notag
\end{eqnarray}%
The tension of the pseudo-Riemannian submersion $\tau $ is given by%
\begin{equation}
\tau (\pi )=\sum_{i=1}^{3}g(e_{i},e_{i})\left[ \nabla _{e_{i}}^{\pi }d\pi
(e_{i})-d\pi (\nabla _{e_{i}}^{M}e_{i})\right] =-\delta _{3}d\pi (\nabla
_{e_{3}}^{M}e_{3})=-\delta _{1}\lambda \varepsilon _{1}-\delta _{2}\mu
\varepsilon _{2}.  \label{6}
\end{equation}%
After some calculation by using (\ref{5}) we get%
\begin{eqnarray*}
\tau ^{2}(\pi ) &=&\sum_{i=1}^{3}g(e_{i},e_{i})\left\{ \nabla _{e_{i}}^{\pi
}\nabla _{e_{i}}^{\pi }\tau (\pi )-\nabla _{\nabla _{e_{i}}^{M}e_{i}}^{\pi
}\tau (\pi )-R^{B}(d\pi (e_{i}),\tau (\pi ))d\pi (e_{i})\right\} \\
&=&\delta _{1}\left[ 
\begin{array}{c}
\nabla _{e_{1}}^{\pi }(-\delta _{1}e_{1}(\lambda )\varepsilon _{1}-\delta
_{1}\lambda \nabla _{e_{1}}^{\pi }\varepsilon _{1})+\nabla _{e_{1}}^{\pi
}(-\delta _{2}e_{1}(\mu )\varepsilon _{2}-\delta _{2}\mu \nabla
_{e_{1}}^{\pi }\varepsilon _{2}) \\ 
+\delta _{1}\delta _{2}l_{1}\nabla _{e_{2}}^{\pi }(-\delta _{1}\lambda
\varepsilon _{1}-\delta _{2}\mu \varepsilon _{2})+\delta _{2}\mu
R^{B}(\varepsilon _{1},\varepsilon _{2})\varepsilon _{1}%
\end{array}%
\right] \\
&&+\delta _{2}\left[ 
\begin{array}{c}
\nabla _{e_{2}}^{\pi }(-\delta _{1}e_{2}(\lambda )\varepsilon _{1}-\delta
_{1}\lambda \nabla _{e_{2}}^{\pi }\varepsilon _{1})+\nabla _{e_{2}}^{\pi
}(-\delta _{2}e_{2}(\mu )\varepsilon _{2}-\delta _{2}\mu \nabla
_{e_{2}}^{\pi }\varepsilon _{2}) \\ 
-\delta _{1}\delta _{2}l_{2}\nabla _{e_{1}}^{\pi }(-\delta _{1}\lambda
\varepsilon _{1}-\delta _{2}\mu \varepsilon _{2})+\delta _{1}\lambda
R^{B}(\varepsilon _{2},\varepsilon _{1})\varepsilon _{2}%
\end{array}%
\right] \\
&&\delta _{3}\left[ 
\begin{array}{c}
\nabla _{e_{3}}^{\pi }(-\delta _{1}e_{3}(\lambda )\varepsilon _{1}-\delta
_{1}\lambda \nabla _{e_{3}}^{\pi }\varepsilon _{1})+\nabla _{e_{3}}^{\pi
}(-\delta _{2}e_{3}(\mu )\varepsilon _{2}-\delta _{2}\mu \nabla
_{e_{3}}^{\pi }\varepsilon _{2}) \\ 
-\delta _{1}\delta _{3}\lambda \nabla _{e_{1}}^{\pi }(-\delta _{1}\lambda
\varepsilon _{1}-\delta _{2}\mu \varepsilon _{2})-\delta _{2}\delta _{3}\mu
\nabla _{e_{2}}^{\pi }(-\delta _{1}\lambda \varepsilon _{1}-\delta _{2}\mu
\varepsilon _{2})%
\end{array}%
\right] .\text{ }
\end{eqnarray*}%
Now we calculate Laplace of $\lambda $ and $\mu $. Since $grad\lambda
=\delta _{1}e_{1}(\lambda )e_{1}+\delta _{2}e_{2}(\lambda )e_{2}+\delta
_{3}e_{3}(\lambda )e_{3}$, we obtain 
\begin{eqnarray*}
\Delta ^{m}\lambda &=&\dsum\limits_{i=1}^{3}g(e_{i},e_{i})g(\nabla
_{e_{i}}grad\lambda ,e_{i}) \\
&=&\delta _{1}e_{1}(e_{1}(\lambda ))+\delta _{2}e_{2}(e_{2}(\lambda
))+\delta _{3}e_{3}(e_{3}(\lambda ))+\delta _{2}e_{2}(\lambda )l_{1}-\delta
_{1}e_{1}(\lambda )l_{2} \\
&&-\delta _{1}e_{1}(\lambda )\lambda -\delta _{2}e_{2}(\lambda )\mu .
\end{eqnarray*}%
Using same calculations for $\mu $ we get 
\begin{eqnarray*}
\Delta ^{m}\mu &=&\delta _{1}e_{1}(e_{1}(\mu ))+\delta _{2}e_{2}(e_{2}(\mu
))+\delta _{3}e_{3}(e_{3}(\mu ))+\delta _{2}e_{2}(\mu )l_{1}-\delta
_{1}e_{1}(\mu )l_{2} \\
&&-\delta _{1}e_{1}(\mu )\lambda -\delta _{2}e_{2}(\mu )\mu .
\end{eqnarray*}%
\begin{eqnarray*}
\tau ^{2}(\pi ) &=&\delta _{1}\left[ 
\begin{array}{c}
-\Delta ^{M}\lambda -\delta _{2}l_{1}e_{1}(\mu )-\delta _{2}e_{1}(\mu
l_{1})-\delta _{2}l_{2}e_{2}(\mu )-\delta _{2}e_{2}(\mu l_{2}) \\ 
+\delta _{2}\lambda \mu l_{1}+\delta _{2}\mu ^{2}l_{2}+\lambda \left\{
\delta _{2}l_{1}^{2}+\delta _{1}l_{2}^{2}-\delta _{1}\delta _{2}K^{B}\right\}%
\end{array}%
\right] \varepsilon _{1} \\
&&+\delta _{2}\left[ 
\begin{array}{c}
-\Delta ^{M}\mu +\delta _{1}l_{1}e_{1}(\lambda )+\delta _{1}e_{1}(\lambda
l_{1})+\delta _{1}l_{2}e_{2}(\lambda )+\delta _{1}e_{2}(\lambda l_{2}) \\ 
-\delta _{1}\lambda \mu l_{2}-\delta _{1}\lambda ^{2}l_{1}+\mu \left\{
\delta _{2}l_{1}^{2}+\delta _{1}l_{2}^{2}-\delta _{1}\delta _{2}K^{B}\right\}%
\end{array}%
\right] \varepsilon _{2,}
\end{eqnarray*}%
which completes the proof.

When the integrability function $\mu =0$ we have the following corollary.

\begin{corollary}
Let $\pi :(M_{r}^{3},g)\rightarrow (B_{s}^{2},g^{^{\prime }})$ be a
pseudo-Riemannian submersion with an adapted frame $\left\{
e_{1},e_{2},e_{3}\right\} $ and the integrability functions $l_{1}$, $l_{2}$%
, $\lambda $, $\mu $ and \ $\sigma $ with $\mu =0$ . Then, the
pseudo-Riemannian submersion $\pi $ is biharmonic if and only if%
\begin{eqnarray}
-\delta _{1}\Delta ^{M}\lambda +\lambda \left\{ \delta _{1}\delta
_{2}l_{1}^{2}+l_{2}^{2}-\delta _{2}K^{B}\right\} &=&0,  \label{7} \\
\delta _{1}\delta _{2}l_{1}e_{1}(\lambda )+\delta _{1}\delta
_{2}e_{1}(\lambda l_{1})+\delta _{1}\delta _{2}l_{2}e_{2}(\lambda )+\delta
_{1}\delta _{2}e_{2}(\lambda l_{2})-\delta _{1}\delta _{2}\lambda ^{2}l_{1}
&=&0.  \notag
\end{eqnarray}
\end{corollary}
\end{proof}

The following lemmas will be used to prove Classification Theorem.

\begin{lemma}
Let $\pi :M_{r}^{3}(c)\rightarrow (B_{s}^{2},g^{^{\prime }})$ be a
pseudo-Riemannian submersion from a space form of constant sectional
curvature $c$. Then, for any orthonormal frame $\left\{
e_{1},e_{2},e_{3}\right\} $ on $M_{r}^{3}(c)$ adapted to the
pseudo-Riemannian submersion with $e_{3\text{ }}$being vertical, all the
integrability functions $l_{1}$, $l_{2}$, $\lambda $, $\mu $ and \ $\sigma $
are constant along fibers of $\pi $, i.e.,%
\begin{equation}
e_{3}(l_{1})=e_{3}(l_{2})=e_{3}(\mu )=e_{3}(\lambda )=e_{3}(\sigma )=0
\label{9}
\end{equation}
\end{lemma}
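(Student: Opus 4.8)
The plan is to exploit the curvature constancy of the space form $M_1^3(c)$ to derive differential constraints on the integrability data, and then use a change-of-frame argument to kill the vertical derivatives. The key observation is that the structure equations $(\ref{3})$ together with the connection coefficients $(\ref{5})$ are not independent of the geometry: the curvature tensor $R$ of $M_1^3(c)$ must equal $c(g(Y,Z)X - g(X,Z)Y)$ on every pair of frame vectors. So the first step is to compute $R(e_i,e_j)e_k$ directly from $(\ref{5})$ using $R(E,F) = \nabla_E\nabla_F - \nabla_F\nabla_E - \nabla_{[E,F]}$, and compare the result componentwise with the constant-curvature formula. This yields a system of scalar PDEs relating $e_i(f_j)$, $e_i(k_j)$, $e_i(\sigma)$ and the products of the data to $c$.

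The heart of the matter should be the equations coming from the mixed horizontal-vertical curvature components, e.g. $R(e_1,e_3)e_3$ and $R(e_2,e_3)e_3$, since the Levi-Civita connection in $(\ref{5})$ already encodes how $e_3$ twists against the horizontal frame through $k_1,k_2,\sigma$. Expanding these and matching against $c\,e_1$ (respectively $c\,e_2$) will produce relations that contain $e_3(k_1)$, $e_3(k_2)$, $e_3(\sigma)$ as well as terms like $k_1^2$, $\sigma^2$. The goal is to show, first, that $e_3(\sigma)=0$ and then that the remaining vertical derivatives are forced to vanish. I would also use Lemma 1(iv), which guarantees $[e_i, e_3]$ stays vertical for basic $e_i$, to confirm that the bracket relations in $(\ref{3})$ are consistent and that $e_3(f_1)=e_3(f_2)=0$ holds almost for free—since $f_1,f_2$ are pullbacks $F_i\circ\pi$ of functions on the base $B$, and $e_3$ is vertical, $e_3(f_i) = dF_i(\pi_*e_3) = dF_i(0) = 0$ immediately. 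That disposes of two of the five conditions at once.

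The remaining, genuinely substantive part is to establish $e_3(k_1)=e_3(k_2)=e_3(\sigma)=0$. For these I expect the Jacobi identity for the three brackets in $(\ref{3})$ to be the decisive tool, supplementing the curvature equations. Applying the Jacobi identity to the triple $(e_1,e_2,e_3)$ and collecting the $e_3$-component should yield a first relation; combined with the curvature-matching equations for $R(e_1,e_3)e_1$ and $R(e_2,e_3)e_2$, one gets enough equations to solve for the vertical derivatives and conclude they are zero (or to show they can be absorbed). The phrase ``there exists an orthonormal frame'' in the statement signals that one may need a final gauge step: if the originally chosen adapted frame does not satisfy $(\ref{9})$ outright, one performs a vertical-fiber-constant rotation $e_1' = (\cos\theta)e_1 + (\sin\theta)e_2$, $e_2' = -(\sin\theta)e_1 + (\cos\theta)e_2$ by an angle $\theta$ depending only on base coordinates, chosen so that the transformed data become fiber-constant.

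The main obstacle, I anticipate, will be the bookkeeping in matching all components of the $3$-dimensional curvature tensor while keeping track of which derivatives are horizontal and which are vertical; in particular, isolating $e_3(\sigma)$ cleanly, since $\sigma$ appears quadratically throughout $(\ref{5})$ and couples to both $k_1$ and $k_2$. The risk is that the curvature equations alone underdetermine the vertical derivatives, in which case the existence claim must be secured by the explicit frame rotation rather than by showing every adapted frame already works. I would therefore structure the argument so that the curvature and Jacobi equations reduce the problem to an ODE along each fiber for the frame angle, whose solvability gives the desired adapted frame with $(\ref{9})$.
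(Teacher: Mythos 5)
Your setup coincides with the paper's: you dispose of $e_{3}(f_{1})=e_{3}(f_{2})=0$ exactly as the paper does (pullbacks of base functions are killed by vertical vectors), and the paper likewise writes out the constant-curvature identities for the connection (\ref{5}) and the $e_{3}$-component of the Jacobi identity for $(e_{1},e_{2},e_{3})$. But your plan for the substantive part has a genuine gap. The undifferentiated curvature equations contain no occurrence of $e_{3}(k_{1})$ or $e_{3}(k_{2})$ at all, and the only two components that do contain $e_{3}(\sigma )$ (namely $R_{1323}^{M}$ and $R_{2313}^{M}$) express it through the unknown horizontal derivatives $e_{1}(k_{2})$ and $e_{2}(k_{1})$; in fact their difference is precisely the Jacobi identity, so the system ``curvature plus Jacobi'' is consistent with arbitrary values of $e_{3}(\sigma )$. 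The risk you flag --- that the curvature equations alone underdetermine the vertical derivatives --- is exactly what happens. The missing idea, which is the heart of the paper's proof, is to differentiate the curvature identities along the fiber: apply $e_{3}$ to the purely horizontal equation $R_{1212}^{M}=-c$ and commute $e_{3}$ past $e_{1},e_{2}$ using $[e_{3},e_{i}]=-k_{i}e_{3}$ and $e_{3}(f_{j})=0$; every first-order term cancels and one is left with $\sigma e_{3}(\sigma )=0$, hence $e_{3}(\sigma )=0$. Then applying $e_{3}$ to $R_{1312}^{M}=0$ and $R_{1223}^{M}=0$ (i.e. $e_{1}(\sigma )=2k_{1}\sigma $, $e_{2}(\sigma )=2k_{2}\sigma $) yields $\sigma e_{3}(k_{1})=\sigma e_{3}(k_{2})=0$, whence $e_{3}(k_{1})=e_{3}(k_{2})=0$.

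Your fallback --- securing (\ref{9}) by a rotation of the horizontal frame solving an ODE along each fiber --- cannot repair this gap. A rotated frame $e_{1}^{\prime }=(\cos \theta )e_{1}+(\sin \theta )e_{2}$, $e_{2}^{\prime }=-(\sin \theta )e_{1}+(\cos \theta )e_{2}$ remains \emph{adapted} (i.e. $e_{1}^{\prime },e_{2}^{\prime }$ basic) only if $\theta $ is the pullback of a function on $B$, hence fiber-constant; but then $e_{3}(k_{1}^{\prime })=(\cos \theta )e_{3}(k_{1})+(\sin \theta )e_{3}(k_{2})$ and similarly for the other data, so a fiber-constant rotation can never turn non-fiber-constant data into fiber-constant data. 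An angle genuinely varying along the fibers would destroy basicness and with it the whole framework (\ref{3})--(\ref{5}). The phrase ``there exists'' in the statement refers only to the existence of an adapted frame (quoted from the literature); the actual content of the lemma, and what the paper proves, is that \emph{every} adapted frame automatically has fiber-constant data --- which forces the fiberwise differentiation argument above rather than a gauge-fixing one.
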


\begin{proof}
From definition, $l_{i}=F_{i}\circ \pi $ for $i=1,2$ we can conclude that $%
l_{1}$ and $l_{2}$ are constant along the fibers. It remains to show that 
\begin{equation}
e_{3}(\mu )=e_{3}(\lambda )=e_{3}(\sigma )=0.  \label{9,6}
\end{equation}%
Using the Jacobi identity to the frame $\left\{ e_{1},e_{2},e_{3}\right\} $,
we have%
\begin{equation}
2e_{3}(\sigma )+\lambda l_{1}+\mu l_{2}+e_{2}(\lambda )-e_{1}(\mu )=0.
\label{11,5}
\end{equation}%
By using (\ref{11,5}) and the fact that $M_{1}^{3}(c)$ has constant
sectional curvature $c$, calculating $R_{1312}^{M},$ $R_{1313}^{M},$ $%
R_{1323}^{M},$ $R_{1212}^{M},$ $R_{1223}^{M},$ $R_{2313}^{M},$ $R_{2323}^{M}$
respectively, we get 
\begin{eqnarray}
i)e_{1}(\sigma )-2\lambda \sigma &=&0,  \notag \\
ii)\text{ }\delta _{1}e_{1}(\lambda )+\delta _{1}\delta _{2}\delta
_{3}\sigma ^{2}-\delta _{1}\lambda ^{2}+\delta _{2}\mu l_{1} &=&c,  \notag \\
iii)-e_{1}(\mu )+e_{3}(\sigma )+\lambda l_{1}+\lambda \mu &=&0,  \notag \\
iv)-\delta _{2}e_{2}(l_{1})+\delta _{1}e_{1}(l_{2})-\delta
_{2}l_{1}^{2}-\delta _{1}l_{2}^{2}-3\delta _{1}\delta _{2}\delta _{3}\sigma
^{2} &=&c,  \label{12} \\
v)e_{2}(\sigma )-2\mu \sigma &=&0,  \notag \\
vi)-e_{2}(\lambda )-e_{3}(\sigma )-\mu l_{2}+\lambda \mu &=&0,  \notag \\
vii)\text{\ }\delta _{1}\delta _{2}\delta _{3}\sigma ^{2}+\delta
_{2}e_{2}(\mu )-\delta _{1}\lambda l_{2}-\delta _{2}\mu ^{2} &=&c.  \notag
\end{eqnarray}%
Applying $e_{3}$ to both sides of the equation $iv)$ of (\ref{12}) and using
\ $e_{3}e_{1}=\left[ e_{3},e_{1}\right] +e_{1}e_{3}$ and $e_{3}e_{2}=\left[
e_{3},e_{2}\right] +e_{2}e_{3},$ we obtain%
\begin{equation*}
\sigma e_{3}(\sigma )=0,
\end{equation*}%
which implies%
\begin{equation*}
e_{3}(\sigma )=0.
\end{equation*}%
Using the last equation and applying $e_{3}$ to both sides of the equations $%
i)$ and $v)$ of (\ref{12}) respectively, we get%
\begin{equation*}
e_{3}(\lambda )=0,\text{ \ \ }e_{3}(\mu )=0.
\end{equation*}
\end{proof}

\begin{case}
Spacelike Fiber
\end{case}

$%
\begin{tabular}{|c|c|}
\hline
\begin{tabular}{c}
Submersion \\ 
Signature of $g$ \\ 
Signature of $g^{\prime }$%
\end{tabular}
& \ \ \ \ \ \ \ \ \ \ \ \ New Orthonormal frame of Base Manifold \\ \hline
\begin{tabular}{c}
$\pi :(M_{1}^{3},g)\rightarrow (B_{1}^{2},g^{\prime })$ \\ 
$(e_{1},e_{2},e_{3};+,-,+)$ \\ 
$(\varepsilon _{1},\varepsilon _{2};+,-)$%
\end{tabular}
& $%
\begin{tabular}{l}
$\varepsilon _{1}^{^{\prime }}=-\frac{\bar{\lambda}}{\sqrt{\bar{\lambda}^{2}-%
\bar{\mu}^{2}}}\varepsilon _{1}+\frac{\bar{\mu}}{\sqrt{\bar{\lambda}^{2}-%
\bar{\mu}^{2}}}\varepsilon _{2},$ $\varepsilon _{2}^{^{\prime }}=-\frac{\bar{%
\mu}}{\sqrt{\bar{\lambda}^{2}-\bar{\mu}^{2}}}\varepsilon _{1}+\frac{\bar{%
\lambda}}{\sqrt{\bar{\lambda}^{2}-\bar{\mu}^{2}}}\varepsilon _{2};$if $\bar{%
\lambda}^{2}-\bar{\mu}^{2}>0$ \\ 
$\varepsilon _{1}^{^{\prime }}=-\frac{\bar{\mu}}{\sqrt{\bar{\mu}^{2}-\bar{%
\lambda}^{2}}}\varepsilon _{1}+\frac{\bar{\lambda}}{\sqrt{\bar{\mu}^{2}-\bar{%
\lambda}^{2}}}\varepsilon _{2},$ $\varepsilon _{2}^{^{\prime }}=-\frac{\bar{%
\lambda}}{\sqrt{\bar{\mu}^{2}-\bar{\lambda}^{2}}}\varepsilon _{1}+\frac{\bar{%
\mu}}{\sqrt{\bar{\mu}^{2}-\bar{\lambda}^{2}}}\varepsilon _{2};$if $\bar{\mu}%
^{2}-\bar{\lambda}^{2}>0$%
\end{tabular}%
$ \\ \hline
\begin{tabular}{c}
$\pi :(M_{2}^{3},g)\rightarrow (B_{2}^{2},g^{\prime })$ \\ 
$(e_{1},e_{2},e_{3};-,-,+)$ \\ 
$(\varepsilon _{1},\varepsilon _{2};-,-)$%
\end{tabular}
& $\varepsilon _{1}^{^{\prime }}=\frac{\bar{\lambda}}{\sqrt{\bar{\lambda}%
^{2}+\bar{\mu}^{2}}}\varepsilon _{1}+\frac{\bar{\mu}}{\sqrt{\bar{\lambda}%
^{2}+\bar{\mu}^{2}}}\varepsilon _{2},\varepsilon _{2}^{^{\prime }}=\frac{%
\bar{\mu}}{\sqrt{\bar{\lambda}^{2}+\bar{\mu}^{2}}}\varepsilon _{1}-\frac{%
\bar{\lambda}}{\sqrt{\bar{\lambda}^{2}+\bar{\mu}^{2}}}\varepsilon _{2}$ \\ 
\hline
\begin{tabular}{c}
$\pi :(M^{3},g)\rightarrow (B^{2},g^{\prime })$ \\ 
$(e_{1},e_{2},e_{3};+,+,+)$ \\ 
$(\varepsilon _{1},\varepsilon _{2};+,+)$%
\end{tabular}
& $\varepsilon _{1}^{^{\prime }}=\frac{\bar{\lambda}}{\sqrt{\bar{\lambda}%
^{2}+\bar{\mu}^{2}}}\varepsilon _{1}+\frac{\bar{\mu}}{\sqrt{\bar{\lambda}%
^{2}+\bar{\mu}^{2}}}\varepsilon _{2},$ $\varepsilon _{2}^{^{\prime }}=-\frac{%
\bar{\mu}}{\sqrt{\bar{\lambda}^{2}+\bar{\mu}^{2}}}\varepsilon _{1}+\frac{%
\bar{\lambda}}{\sqrt{\bar{\lambda}^{2}+\bar{\mu}^{2}}}\varepsilon _{2}$ \\ 
\hline
\end{tabular}%
$

\begin{equation*}
Table\text{ }1
\end{equation*}

\begin{case}
Timelike Fiber
\end{case}

\begin{tabular}{|c|c|}
\hline
\begin{tabular}{c}
Submersion \\ 
Signature of $g$ \\ 
Signature of $g^{\prime }$%
\end{tabular}
& \ \ \ \ \ \ \ \ \ \ \ \ New Orthonormal frame of Base Manifold \\ \hline
\begin{tabular}{c}
$\pi :(M_{1}^{3},g)\rightarrow (B^{2},g^{\prime })$ \\ 
$(e_{1},e_{2},e_{3};+,+,-)$ \\ 
$(\varepsilon _{1},\varepsilon _{2};+,+)$%
\end{tabular}
& $\varepsilon _{1}^{^{\prime }}=\frac{\bar{\lambda}}{\sqrt{\bar{\lambda}%
^{2}+\bar{\mu}^{2}}}\varepsilon _{1}+\frac{\bar{\mu}}{\sqrt{\bar{\lambda}%
^{2}+\bar{\mu}^{2}}}\varepsilon _{2},\varepsilon _{2}^{^{\prime }}=\frac{%
\bar{\mu}}{\sqrt{\bar{\lambda}^{2}+\bar{\mu}^{2}}}\varepsilon _{1}-\frac{%
\bar{\lambda}}{\sqrt{\bar{\lambda}^{2}+\bar{\mu}^{2}}}\varepsilon _{2}$ \\ 
\hline
\begin{tabular}{c}
$\pi :(M_{2}^{3},g)\rightarrow (B_{1}^{2},g^{\prime })$ \\ 
$(e_{1},e_{2},e_{3};+-,-)$ \\ 
$(\varepsilon _{1},\varepsilon _{2}:+,-)$%
\end{tabular}
& $%
\begin{tabular}{l}
$\varepsilon _{1}^{^{\prime }}=-\frac{\bar{\lambda}}{\sqrt{\bar{\lambda}^{2}-%
\bar{\mu}^{2}}}\varepsilon _{1}+\frac{\bar{\mu}}{\sqrt{\bar{\lambda}^{2}-%
\bar{\mu}^{2}}}\varepsilon _{2},$ $\varepsilon _{2}^{^{\prime }}=-\frac{\bar{%
\mu}}{\sqrt{\bar{\lambda}^{2}-\bar{\mu}^{2}}}\varepsilon _{1}+\frac{\bar{%
\lambda}}{\sqrt{\bar{\lambda}^{2}-\bar{\mu}^{2}}}\varepsilon _{2};$if $\bar{%
\lambda}^{2}-\bar{\mu}^{2}>0$ \\ 
$\varepsilon _{1}^{^{\prime }}=-\frac{\bar{\mu}}{\sqrt{\bar{\mu}^{2}-\bar{%
\lambda}^{2}}}\varepsilon _{1}+\frac{\bar{\lambda}}{\sqrt{\bar{\mu}^{2}-\bar{%
\lambda}^{2}}}\varepsilon _{2},$ $\varepsilon _{2}^{^{\prime }}=-\frac{\bar{%
\lambda}}{\sqrt{\bar{\mu}^{2}-\bar{\lambda}^{2}}}\varepsilon _{1}+\frac{\bar{%
\mu}}{\sqrt{\bar{\mu}^{2}-\bar{\lambda}^{2}}}\varepsilon _{2};$if $\bar{\mu}%
^{2}-\bar{\lambda}^{2}>0$%
\end{tabular}%
$ \\ \hline
\begin{tabular}{c}
$\pi :(M_{3}^{3},g)\rightarrow (B_{2}^{2},g^{\prime })$ \\ 
$(e_{1},e_{2},e_{3};-,-,-)$ \\ 
$(\varepsilon _{1},\varepsilon _{2}:-,-)$%
\end{tabular}
& $\varepsilon _{1}^{^{\prime }}=\frac{\bar{\lambda}}{\sqrt{\bar{\lambda}%
^{2}+\bar{\mu}^{2}}}\varepsilon _{1}+\frac{\bar{\mu}}{\sqrt{\bar{\lambda}%
^{2}+\bar{\mu}^{2}}}\varepsilon _{2},\varepsilon _{2}^{^{\prime }}=\frac{%
\bar{\mu}}{\sqrt{\bar{\lambda}^{2}+\bar{\mu}^{2}}}\varepsilon _{1}-\frac{%
\bar{\lambda}}{\sqrt{\bar{\lambda}^{2}+\bar{\mu}^{2}}}\varepsilon _{2}$ \\ 
\hline
\end{tabular}%
\begin{equation*}
Table\text{ }2
\end{equation*}

\begin{lemma}
Let $\pi :(M_{r}^{3}(c),g)\rightarrow (B_{s}^{2},g^{^{\prime }})$ be a
pseudo-Riemannian submersion with an adapted frame $\left\{
e_{1},e_{2},e_{3}\right\} $ and the integrability functions $l_{1}$, $l_{2}$%
, $\lambda $, $\mu $ and \ $\sigma $ . Then, there exists another adapted
orthonormal frame $\left\{ e_{1}^{^{\prime }},e_{2}^{^{\prime
}},e_{3}^{^{\prime }}=e_{3}\right\} $ on $M_{r}^{3}(c)$ with integrability
functions $\mu ^{^{\prime }}=0,$ and $\sigma ^{^{\prime }}=\sigma .$
\end{lemma}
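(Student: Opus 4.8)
The plan is to obtain the new frame by a fiber-preserving rotation of the horizontal plane, leaving the vertical direction $e_3$ fixed. Concretely, I would set
\begin{equation*}
e_1^{\prime}=\cos\theta\, e_1+\sin\theta\, e_2,\qquad e_2^{\prime}=-\sin\theta\, e_1+\cos\theta\, e_2,\qquad e_3^{\prime}=e_3,
\end{equation*}
and choose the angle $\theta$ so that $\cos\theta=k_1/\sqrt{k_1^{2}+k_2^{2}}$ and $\sin\theta=k_2/\sqrt{k_1^{2}+k_2^{2}}$ (if $k_1=k_2=0$ there is nothing to prove, so take $\theta=0$). Since $g(e_1,e_1)=g(e_2,e_2)=1$ and $g(e_1,e_2)=0$, this transformation is an isometry of the horizontal distribution, so $\{e_1^{\prime},e_2^{\prime},e_3^{\prime}\}$ is again orthonormal with $g(e_3^{\prime},e_3^{\prime})=-1$.

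Before reading off the new integrability data I must check that the rotated horizontal frame is still adapted, i.e. that $e_1^{\prime},e_2^{\prime}$ are basic. This reduces to showing that $\theta$ is constant along the fibers, $e_3(\theta)=0$. Here I would invoke the previous lemma: since $M_1^{3}(c)$ has constant sectional curvature we have $e_3(k_1)=e_3(k_2)=0$, and therefore $e_3(\cos\theta)=e_3(\sin\theta)=0$. Consequently $\cos\theta$ and $\sin\theta$ descend to functions on $B$, so $e_1^{\prime}$ and $e_2^{\prime}$ are the horizontal lifts of the correspondingly rotated orthonormal frame on the base, hence basic.

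Next I would compute the three Lie brackets of the primed frame directly from (\ref{3}), using $[fX,Y]=f[X,Y]-Y(f)X$. For $[e_1^{\prime},e_3]$ and $[e_2^{\prime},e_3]$ the derivative terms carry the factors $e_3(\cos\theta),e_3(\sin\theta)$, which vanish by the preceding paragraph, leaving
\begin{equation*}
[e_1^{\prime},e_3]=(k_1\cos\theta+k_2\sin\theta)\,e_3,\qquad [e_2^{\prime},e_3]=(-k_1\sin\theta+k_2\cos\theta)\,e_3.
\end{equation*}
With the chosen $\theta$ these give exactly $k_1^{\prime}=\sqrt{k_1^{2}+k_2^{2}}$ and $k_2^{\prime}=0$. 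For $[e_1^{\prime},e_2^{\prime}]$ the coefficient multiplying $[e_1,e_2]$ collects to $\cos^{2}\theta+\sin^{2}\theta=1$, while every remaining term lies in the horizontal span of $e_1,e_2$; hence the vertical part of $[e_1^{\prime},e_2^{\prime}]$ equals that of $[e_1,e_2]$, namely $-2\sigma e_3$, giving $\sigma^{\prime}=\sigma$. The surviving horizontal part then defines $f_1^{\prime},f_2^{\prime}$, which I need not track since the statement leaves them unspecified.

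The one genuinely delicate point is the adaptedness check: without $e_3(\theta)=0$ the brackets $[e_i^{\prime},e_3]$ would acquire horizontal components, the primed frame would fail to be adapted, and the very notion of its integrability data would break down. This is precisely why the previous lemma, guaranteeing $e_3(k_1)=e_3(k_2)=0$ on a space form, is indispensable; everything else is a routine bilinear expansion of the Lie brackets.
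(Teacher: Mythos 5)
Your proof is correct and takes essentially the same approach as the paper: both perform the rotation of the horizontal frame by the coefficients $k_{1}/\sqrt{k_{1}^{2}+k_{2}^{2}}$, $k_{2}/\sqrt{k_{1}^{2}+k_{2}^{2}}$, and both rely on the preceding lemma's conclusion $e_{3}(k_{1})=e_{3}(k_{2})=0$ to make this rotation constant along fibers and hence compatible with adaptedness. The only cosmetic difference is that the paper first descends $k_{1},k_{2}$ to functions $\bar{k}_{1},\bar{k}_{2}$ on $B$ (via the cited differentiability proposition), rotates the base frame $\{\varepsilon_{1},\varepsilon_{2}\}$, and lifts, whereas you rotate upstairs and verify the coefficients descend; your explicit Lie-bracket computation of $k_{1}^{\prime},k_{2}^{\prime},\sigma^{\prime}$ simply fills in what the paper leaves as ``one can easily check.''
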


\begin{proof}
Applying the same method in (\cite{ze-ye}, Lemma 3.2) and using Lemma 4 ,
Table 1 and Table 2, one can complete the proof of the lemma.
\end{proof}

Now we will give a classification of biharmonic pseudo-Riemannian
submersions.

\textbf{Classification Theorem:}\textit{\ Let }$\pi :M_{r}^{3}(c)\rightarrow
B_{s}^{2}$\textit{\ be a pseudo-Riemannian submersion from a space form of
constant sectional curvature }$c$\textit{. Then, }$\pi $\textit{\ is
biharmonic if and only if it is equivalent to one of the following
submersions:}

\begin{tabular}{|l|l|}
\hline
\ \ \ \ \ \ \ \ \ \ Timelike Fiber & \ \ \ \ \ \ \ \ \ \ \ \ \ \ \ \ \ \ \ \
\ Spacelike Fiber \\ \hline
$\pi _{1}:H_{3}^{3}(-1)\rightarrow H_{2}^{2}(-4)=CH_{1}^{1};$ & $\pi
_{6}:E_{2}^{3}\rightarrow E_{2}^{2};$ \\ \hline
$\pi _{2}:E_{3}^{3}\rightarrow E_{2}^{2};$ & $\pi
_{7}:H_{1}^{3}(-1)\rightarrow H_{1}^{2}(-4)=AH^{1};$ \\ \hline
$\pi _{3}:H_{1}^{3}(-1)\rightarrow H^{2}(-4)=CH^{1};$ & $\pi
_{8}:E_{1}^{3}\rightarrow E_{1}^{2};$ \\ \hline
$\pi _{4}:E_{1}^{3}\rightarrow E^{2};$ & $\pi _{9}:S^{3}(1)\rightarrow
S^{2}\left( \frac{1}{2}\right) =CP^{1};$is proved by \cite{ze-ye} \\ \hline
$\pi _{5}:E_{2}^{3}\rightarrow E_{1}^{2};$ & $\pi _{10}:E^{3}\rightarrow
E^{2},$is proved by \cite{ze-ye} \\ \hline
\end{tabular}%
\begin{equation*}
Table\text{ }3
\end{equation*}

\begin{proof}
By Lemma $5$, we can choose an orthonormal frame $\left\{
e_{1},e_{2},e_{3}\right\} $ adapted to the pseudo-Riemannian submersion with
integrability functions $l_{1}$, $l_{2}$, $\lambda $, $\mu $ and \ $\sigma $
with $\mu =0$. According to this frame (\ref{12}) reduces to%
\begin{eqnarray}
a_{1})e_{1}(\sigma )-2\lambda \sigma &=&0,  \notag \\
a_{2})\delta _{1}e_{1}(\lambda )+\delta _{1}\delta _{2}\delta _{3}\sigma
^{2}-\delta _{1}\lambda ^{2} &=&c,  \notag \\
a_{3})\lambda l_{1} &=&0,  \notag \\
a_{4})-\delta _{2}e_{2}(l_{1})+\delta _{1}e_{1}(l_{2})-\delta
_{2}l_{1}^{2}-\delta _{1}l_{2}^{2}-3\delta _{1}\delta _{2}\delta _{3}\sigma
^{2} &=&c,  \label{13} \\
a_{5})e_{2}(\sigma ) &=&0,  \notag \\
a_{6})e_{2}(\lambda ) &=&0,  \notag \\
a_{7})\delta _{1}\delta _{2}\delta _{3}\sigma ^{2}-\delta _{1}\lambda l_{2}
&=&c.  \notag
\end{eqnarray}

From $a_{3})$ of (\ref{13}), we have either $\lambda =0$ or $l_{1}=0.$ If $%
\lambda =0$, from (\ref{6}) the tension field of $\pi $ vanishes. This means
that pseudo-Riemannian submersion is harmonic. If $l_{1}=0$ and $\lambda
\neq 0$, this case can not happen. We will prove this by a contradiction.

Case I$:$ $\lambda \neq 0,$ $\ l_{1}=0$ and $\ l_{2}=0.$ So, from $a_{4}),$ $%
a_{7})$ in (\ref{13}), we have $\sigma =c=0$. If we put $l_{1}=l_{2}=\sigma
=0$ and $\mu =0$ into (\ref{7}) we obtain%
\begin{equation*}
\Delta ^{M}\lambda =0,
\end{equation*}%
which, one can easily get by using $a_{2}),$ $a_{6})$ of (\ref{13}) ,%
\begin{equation*}
\lambda ^{3}=0.
\end{equation*}%
It follows that $\lambda =0$ which is a contradiction.

Case II$:$ $\lambda \neq 0,$ $\ \ l_{1}=0$ and $\ l_{2}\neq 0.$ In this
case, by using $\ l_{1}=0$ \ and $a_{5}),$ $a_{6})$ and $a_{7})$ of (\ref{13}%
), (\ref{7}) reduces to%
\begin{equation}
-\delta _{1}\Delta ^{M}\lambda +\lambda \left[ -\delta _{2}c-3\delta
_{1}\delta _{3}\sigma ^{2}+l_{2}^{2}\right] =0,  \label{144}
\end{equation}%
where $K^{B}=c+3\delta _{1}\delta _{2}\delta _{3}\sigma ^{2}$ obtained from
curvature formula for a pseudo-Riemannian submersion. Using $a_{1}),a_{2})$
of (\ref{13}) and after a straightforward calculation yields%
\begin{eqnarray*}
\Delta ^{M}\lambda &=&\delta _{1}e_{1}(e_{1}(\lambda ))-\delta
_{1}e_{1}(\lambda )l_{2}-\delta _{1}e_{1}(\lambda )\lambda \\
\Delta ^{M}\lambda &=&-5\delta _{1}\delta _{2}\delta _{3}\lambda \sigma
^{2}+\delta _{1}\lambda ^{3}+\lambda c+l_{2}(-c+\delta _{1}\delta _{2}\delta
_{3}\sigma ^{2}-\delta _{1}\lambda ^{2}).
\end{eqnarray*}%
Substituting this into (\ref{144}) and using $a_{7})$ we obtain%
\begin{equation}
\lambda \left[ \delta _{3}(6\delta _{2}-3\delta _{1})\sigma ^{2}-\lambda
^{2}-(2\delta _{1}+\delta _{2})c\right] =0.  \label{15}
\end{equation}%
We accept $\lambda \neq 0,$ so (\ref{15}) is equivalent to \ 
\begin{equation}
\lambda ^{2}=\delta _{3}(6\delta _{2}-3\delta _{1})\sigma ^{2}-(2\delta
_{1}+\delta _{2})c.  \label{16}
\end{equation}%
After applying $e_{1}$ to both sides of (\ref{16}), we get 
\begin{equation*}
\lambda e_{1}(\lambda )=\delta _{3}(6\delta _{2}-3\delta _{1})\sigma
e_{1}(\sigma ).
\end{equation*}%
Combining this and $a_{1})$ , $a_{2})$ in (\ref{13}), we have%
\begin{equation*}
\lambda (\lambda ^{2}-\delta _{2}\delta _{3}\sigma ^{2}+\delta
_{1}c)=2\delta _{3}(6\delta _{2}-3\delta _{1})\lambda \sigma ^{2}.
\end{equation*}%
By assumption $\lambda \neq 0,$ this turned into%
\begin{equation*}
\lambda ^{2}+\delta _{1}c=\delta _{3}(13\delta _{2}-6\delta _{1})\sigma ^{2},
\end{equation*}%
or%
\begin{equation}
\lambda ^{2}=\delta _{3}(13\delta _{2}-6\delta _{1})\sigma ^{2}-\delta _{1}c.
\label{17}
\end{equation}%
Applying $e_{1}$ to both sides of (\ref{17}) \ and again using $a_{1})$, $%
a_{2})$ in (\ref{13}) we get%
\begin{equation}
\lambda ^{2}=\delta _{3}(27\delta _{2}-12\delta _{1})\sigma ^{2}-\delta
_{1}c.  \label{18}
\end{equation}%
Combining (\ref{16}), (\ref{17})\ with (\ref{18}) we have $\lambda =\sigma
=c=0.$ This implies there is a contradiction. Because our assumption is $%
\lambda \neq 0.$So we have $\ \lambda =\mu =0.$ If we use (\ref{5}) in the
first equation of (\ref{AT}) we get $T(e_{i},e_{j})=0,$ $1\leq i,j\leq 3.$
It means that fiber is totally geodesic. By (a$_{2})$of (\ref{13}), we \ have%
\begin{equation}
\delta _{1}\delta _{2}\delta _{3}\sigma ^{2}=c.  \label{19}
\end{equation}

Using the last equation and Theorem 3 , we get our classification.
\end{proof}

\end{document}